\documentclass[a4paper]{article}

\usepackage{amsmath,amssymb,amsthm}
\usepackage[left=2.9cm, right=2.9cm, top=2.9cm, bottom=2.9cm]{geometry}
\usepackage{hyperref}
\usepackage{mathtools}
\usepackage{pgfplots}
\usepackage{subfig}
\usepackage{tikz}
\usetikzlibrary{quotes,angles}
\usepackage{xcolor}

% Allow displaybreaks
\allowdisplaybreaks

% Setup pgfplots
\pgfplotsset{trig format plots=rad, compat=1.16}

% Fancy colors
% \definecolor{DarkPurple}{HTML}{381d2a}
% \definecolor{QueenBlue}{HTML}{3e6990}
% \definecolor{LaurelGreen}{HTML}{aabd8c}
% \definecolor{DutchWhite}{HTML}{e9e3b4}
% \definecolor{PinkOrange}{HTML}{f39b6d}

% \newcommand{\ra}[1]{{\color{LaurelGreen}{#1}}}
% \newcommand{\mw}[1]{{\color{PinkOrange}{#1}}}

% \newcommand{\rareplace}[2]{{\color{LaurelGreen}#1 \sout{#2}}}
% \newcommand{\radelete}[1]{{\color{LaurelGreen}\sout{#1}}}
% \newcommand{\raadd}[1]{{\color{LaurelGreen}#1}}

% Hypersetup
% \hypersetup{
% citebordercolor = LaurelGreen,
% urlbordercolor = QueenBlue,
% linkbordercolor = PinkOrange
% }

% The theorem environment
\newtheorem{theorem}{Theorem}[section]
\newtheorem{lemma}[theorem]{Lemma}
\newtheorem{remark}[theorem]{Remark}

\newtheorem{question}[theorem]{Question}

% Calligraphic letters

\newcommand{\calF}{\mathcal{F}}

\newcommand{\calV}{\mathcal{V}}

% Blackboard letters
\newcommand{\bbC}{\mathbb{C}}

\newcommand{\bbR}{\mathbb{R}}
\newcommand{\bbZ}{\mathbb{Z}}

% Bold numbers

% Bold letters

% Bold Greek letters

% Roman letters

\newcommand{\rme}{\mathrm{e}}
\newcommand{\rmi}{\mathrm{i}}

% Fraktur letters

% Different left and right delimiters

\newcommand{\abs}[1]{\left\lvert #1 \right\rvert}

% Assorted operators

% Definitions for integration
\newcommand{\dd}{\,\mathrm{d}}

\begin{document}

\title{Phase retrieval from sampled Gabor transform magnitudes: Counterexamples}
\author{Rima Alaifari\thanks{ETH Zurich, Seminar for Applied Mathematics, HG G 58.3,
Ramistrasse 101, 8092 Zurich, Switzerland} \and Matthias Wellershoff\thanks{ETH Zurich, Seminar for
Applied Mathematics, HG G 58.3, Ramistrasse 101, 8092 Zurich, Switzerland,
\href{mailto:matthias.wellershoff@sam.math.ethz.ch}{\texttt{matthias.wellershoff@sam.math.ethz.ch}}.}}
\date{\today}

\maketitle

\begin{abstract}
    We consider the recovery of square-integrable signals from discrete, equidistant samples of their Gabor transform magnitude and show that, in general, signals can not be recovered from such samples. In particular, we show that for any lattice, one can construct functions in $L^2(\bbR)$ which do not agree up to global phase but whose Gabor transform magnitudes sampled on the lattice agree. These functions have good concentration in both time and frequency and can be constructed to be real-valued for rectangular lattices.

    \vspace{5pt}
    \noindent
    \textbf{Keywords}~Phase retrieval, Gabor transform, Sampling theory, Time-frequency analysis

    \vspace{5pt}
    \noindent
    \textbf{Mathematics Subject Classification (2010)}~94A12, 94A20
\end{abstract}

\section{Introduction}
\label{sec:introduction}

Let us consider the Gaussian $\phi(t) = \rme^{-\pi t^2}$, for $t \in \bbR$. We may define the Gabor transform of a signal $f \in L^2(\bbR)$ via 
\[
    \calV_\phi f(x,\omega) = \int_\bbR f(t) \phi(t-x) \rme^{-2\pi\rmi t \omega} \dd t,
    \qquad (x,\omega) \in \bbR^2.
\]
In this paper, we are interested in the uniqueness question of the \emph{Gabor phase retrieval} problem which consists of recovering a function $f \in L^2(\bbR)$ from the magnitude measurements
\begin{equation}
    \label{eq:measurements}
    \abs{\calV_\phi f(x,\omega)}, \qquad (x,\omega) \in S,
\end{equation}
where $S$ is a subset of $\bbR^2$. We say that $f$ is uniquely determined (up to a global phase factor) by the measurements \eqref{eq:measurements} if for any $g \in L^2(\mathbb{R})$, 
\[
    \abs{\calV_\phi f(x,\omega)} = \abs{\calV_\phi g(x,\omega)} \qquad (x,\omega) \in S,
\]
implies that 
\[
    f = \rme^{\rmi \mu} g,
\]
for some $\mu \in \mathbb{R}$. When $S=\mathbb{R}^2$, it is well-known that Gabor phase retrieval is uniquely solvable. In fact, this result holds true for any window function $\psi$ for which $\calV_\psi \psi$ is non-zero almost everywhere on $\bbR^2$. However, when $S$ is a true subset of $\mathbb{R}^2$, the answer is less clear. In particular, measurements can only be collected on discrete sets $S$ in applications. Thus, the question of uniqueness for Gabor phase retrieval is specifically interesting when $S$ is discrete.

In recent work \cite{alaifari2021uniqueness}, we were able to show that \emph{real-valued, bandlimited} signals in $L^2(\bbR)$ are uniquely determined up to global phase from Gabor magnitude measurements \eqref{eq:measurements} sampled on the discrete set $S = (4B)^{-1} \bbZ \times \{0\}$, where $B > 0$ is such that the bandwidth of the signal is contained in $[-B,B]$. While we were writing this paper, work by Grohs and Liehr \cite{grohs2020injectivity} appeared showing that it is possible to recover \emph{compactly supported} signals in $L^4([-C/2,C/2])$ up to global phase from Gabor magnitude measurements \eqref{eq:measurements} sampled on the discrete set $S = \bbZ \times (2C)^{-1} \bbZ$. Finally, during the review process of this paper, the work \cite{grohs2021foundational} appeared. In \cite{grohs2021foundational}, the authors generalise the findings proposed here to general short-time Fourier transform phase retrieval.

We focus on the general uniqueness question for Gabor phase retrieval:
\begin{question}
    \label{q:research_question}
    Is there any lattice $S \subset \bbR^2$ such that all functions in $L^2(\bbR)$ are uniquely determined up to global phase from Gabor magnitude measurements \eqref{eq:measurements} sampled on $S$?
\end{question}

The main contribution of this paper is that we answer this question negatively: In particular, no matter how fine-grained the sampling lattice $S$, one will not be able to recover all functions in $L^2(\bbR)$ from Gabor magnitude measurements \eqref{eq:measurements} on $S$. Our answer to Question \ref{q:research_question} is constructive in the sense that we are able to explicitly give functions $f_{\pm} \in L^2(\bbR)$ which do not agree up to global phase but which satisfy 
\begin{equation*}
    \abs{\calV_\phi f_+(x,\omega)} = \abs{\calV_\phi f_-(x,\omega)}, \qquad (x,\omega) \in S.
\end{equation*}
We note that the functions $f_{\pm} \in L^2(\bbR)$ are well concentrated in both time and frequency and if $S = a \bbZ \times b \bbZ$, $a,b > 0$, is a rectangular lattice, then $f_+$ and $f_-$ can be constructed to be real-valued.

\subsection{Basic notions}
\label{ssec:basic_notions}

We want to emphasise that the notation $\phi$ is reserved for the Gaussian $\phi(t) = \rme^{-\pi t^2}$, for $t \in \bbR$, throughout this paper.

We will encounter the \emph{fractional Fourier transform} of a function $f \in L^1(\bbR)$ defined by 
\begin{equation*}
    \calF_\alpha f(\omega) := c_\alpha \rme^{\pi \rmi \omega^2 \cot \alpha} \int_\bbR f(t) \rme^{\pi\rmi t^2 \cot \alpha} \rme^{-2 \pi \rmi \frac{t \omega}{\sin \alpha}} \dd t, \qquad \omega \in \bbR,
\end{equation*}
for $\alpha \in \bbR \setminus \pi \bbZ$, where $c_\alpha \in \bbC$ is the square root of $1 - \rmi \cot \alpha$ with positive real part, and by $\calF_{2\pi k} f := f$ as well as $\calF_{(2k+1)\pi} f(\omega) := f(-\omega)$, for $\omega \in \bbR$, where $k \in \bbZ$ \cite{jaming2014uniqueness}. One may show that the fractional Fourier transform preserves the canonical inner product $(\cdot,\cdot)$ on $L^2(\bbR)$ in the sense that for all $\alpha \in \bbR$ and $f,g \in L^1(\bbR) \cap L^2(\bbR)$ it holds that 
\begin{equation*}
    (f,g) = (\calF_\alpha f, \calF_\alpha g).
\end{equation*}
It follows that one may extend the fractional Fourier transform to $L^2(\bbR)$ by a classical density argument. For the further understanding of this paper, it is essential to observe that the fractional Fourier transform rotates functions in the time-frequency plane in the following sense: 

\begin{lemma}[See p.~424 of \cite{jaming2014uniqueness}]
    \label{lem:frftandgabor}
    Let $\alpha \in \bbR$ and $f,g \in L^2(\bbR)$. It holds that 
    \begin{equation*}
        \calV_{\calF_\alpha g} \calF_\alpha f (x,\omega) = \calV_g f (x \cos \alpha - \omega \sin \alpha, x \sin \alpha + \omega \cos \alpha) \rme^{\pi \rmi \sin \alpha \left( \left( x^2 - \omega^2 \right) \cos \alpha - 2 x\omega \sin \alpha \right)},
    \end{equation*}
    for $x,\omega \in \bbR$.
\end{lemma}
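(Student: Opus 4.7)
The plan is to split into the degenerate cases $\alpha \in \pi\bbZ$ (where $\calF_\alpha$ is defined piecewise) and the generic case $\alpha \notin \pi\bbZ$. For $\alpha \in 2\pi\bbZ$, $\calF_\alpha$ is the identity, the rotation is the identity, and the exponential prefactor equals $1$ since $\sin\alpha = 0$; the identity reduces to a tautology. For $\alpha \in (2\bbZ+1)\pi$, we have $\calF_\alpha f(t)=f(-t)$ and $\calF_\alpha g(t-x) = g(-(t-x))$, so the change of variables $t \mapsto -t$ in the defining integral of $\calV_{\calF_\alpha g}\calF_\alpha f(x,\omega)$ yields exactly $\calV_g f(-x,-\omega)$, again with the exponential factor equal to $1$.

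For the generic case, by a density argument in $L^2(\bbR)$ (using continuity of both sides in $(f,g)$) it suffices to treat $f,g \in \calS(\bbR)$. I would then substitute the integral representations
\begin{equation*}
    \calF_\alpha f(t) = c_\alpha \rme^{\pi\rmi t^2\cot\alpha}\int_\bbR f(s)\rme^{\pi\rmi s^2\cot\alpha}\rme^{-2\pi\rmi ts/\sin\alpha}\dd s
\end{equation*}
and the analogous one for $\calF_\alpha g(t-x)$ into
\begin{equation*}
    \calV_{\calF_\alpha g}\calF_\alpha f(x,\omega)=\int_\bbR \calF_\alpha f(t)\,\calF_\alpha g(t-x)\,\rme^{-2\pi\rmi t\omega}\dd t,
\end{equation*}
invoke Fubini (valid on $\calS(\bbR)$ after a standard regularization), and evaluate the inner Gaussian/Fresnel integral in $t$ whose quadratic coefficient is $2\pi\rmi\cot\alpha$. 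The factor $c_\alpha^2 = 1 - \rmi\cot\alpha = \sin^{-1}\alpha\,(\sin\alpha - \rmi\cos\alpha)$ exactly cancels against the normalizing constant from this Gaussian integral, leaving a double integral in $(s,u)$ with a quadratic phase. Completing the square in $(s,u)$ and making the linear substitutions $s\mapsto s+x\cos\alpha-\omega\sin\alpha$, $u\mapsto u$ (or the dual) should produce, after using $\sin^2\alpha+\cos^2\alpha=1$ and $\cot\alpha\sin\alpha=\cos\alpha$, the Gabor transform $\calV_g f$ at the rotated point $(x\cos\alpha-\omega\sin\alpha,\,x\sin\alpha+\omega\cos\alpha)$ together with the claimed quadratic phase prefactor.

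An alternative, more conceptual route that I would keep in mind as a sanity check is to exploit that $\calF_\alpha$ is the metaplectic operator covering the rotation $R_\alpha \in Sp(2,\bbR)$. This yields a covariance relation of the form $\calF_\alpha \pi(x,\omega) = \rme^{\rmi\vartheta(\alpha,x,\omega)}\pi(R_\alpha(x,\omega))\calF_\alpha$ for the time-frequency shifts $\pi(x,\omega)$, and since $\calV_g f(x,\omega)$ can be written via $\pi(x,\omega)$, inserting $\calF_{-\alpha}\calF_\alpha = \mathrm{Id}$ and reading off the phase $\vartheta$ recovers the lemma. Either approach reduces the problem to verifying a single identity about quadratic phases.

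The main obstacle, beyond the bookkeeping, is tracking the correct branch of $c_\alpha$ so that the prefactors from the two fractional Fourier transforms combine with the Gaussian integral to give $1$ rather than a spurious sign, and simplifying the remaining quadratic phase in $(x,\omega)$ to the compact form $\pi\rmi\sin\alpha((x^2-\omega^2)\cos\alpha - 2x\omega\sin\alpha)$; this last step uses the double-angle identities $2\sin\alpha\cos\alpha = \sin 2\alpha$ and $\cos^2\alpha - \sin^2\alpha = \cos 2\alpha$ in reverse to recognize the chirp factor.
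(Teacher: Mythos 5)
First, a point of reference: the paper itself does not prove Lemma \ref{lem:frftandgabor}; it is imported verbatim from Jaming's paper with a citation, so there is no in-paper argument to compare against. Your overall strategy --- dispatching $\alpha \in \pi\bbZ$ by hand, reducing to Schwartz functions by density (both sides are continuous in $(f,g)$ on $L^2 \times L^2$ by Cauchy--Schwarz and unitarity of $\calF_\alpha$, so this step is fine), and then computing directly --- is the standard route, and the metaplectic-covariance alternative you mention is the right conceptual explanation.

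There is, however, one concrete problem that would derail the computation as you have set it up: you write the transform as $\int_\bbR \calF_\alpha f(t)\,\calF_\alpha g(t-x)\,\rme^{-2\pi\rmi t\omega}\dd t$ with no complex conjugate on the window. The paper can afford to omit the conjugate from its definition because it only ever uses the real, even Gaussian window, but for a general $g \in L^2(\bbR)$ the stated identity holds for the conjugated transform $\calV_g f(x,\omega) = \int_\bbR f(t)\,\overline{g(t-x)}\,\rme^{-2\pi\rmi t\omega}\dd t$ and fails without the conjugate (test $\alpha = \pi/2$ with a non-even $g$: the unconjugated computation produces $g(-\,\cdot\,)$ where the lemma requires $g$). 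The symptom is visible in your own plan: the chirps $\rme^{\pi\rmi t^2\cot\alpha}$ coming from $\calF_\alpha f(t)$ and from $\overline{\calF_\alpha g(t-x)}$ should \emph{cancel}, so that the inner $t$-integral is a pure Fourier integral yielding a Dirac delta, the factor $\abs{c_\alpha}^2 = \abs{1-\rmi\cot\alpha} = 1/\abs{\sin\alpha}$ absorbs the resulting Jacobian, and no branch-of-$c_\alpha$ or Fresnel-regularisation issues ever arise. The quadratic coefficient $2\pi\rmi\cot\alpha$ you report is the signature of the dropped conjugate; besides steering you toward the wrong identity, the Fresnel step also degenerates at $\alpha \in \tfrac{\pi}{2} + \pi\bbZ$ (where $\cot\alpha = 0$), a sub-case your plan does not address. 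Restoring the conjugate makes the proof both correct and substantially shorter than you anticipate --- the ``main obstacle'' you identify disappears.
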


In addition, as for the classical Fourier transform, it holds that the Gaussian $\phi$ is invariant under the fractional Fourier transform in the sense that 
\begin{equation*}
    \calF_\alpha \phi = \phi, \qquad \alpha \in \bbR.
\end{equation*}
One can see this by a direct computation using the classical result which can for instance be found on p.~17 of \cite{grochenig2001foundations}.

In the following, we will denote by $R_\alpha : \bbR^2 \to \bbR^2$ the rotation of the time-frequency plane given by 
\begin{equation*}
    R_\alpha(x,\omega) := (x \cos \alpha - \omega \sin \alpha, x \sin \alpha + \omega \cos \alpha), \qquad x,\omega \in \bbR.
\end{equation*}
Additionally, we will use the standard notation for the family of time-shift operators $\operatorname{T}_{x} : L^2(\bbR) \to L^2(\bbR)$, with $x \in \bbR$, given by 
\begin{equation*}
    \operatorname{T}_{x} f(t) = f(t-x), \qquad t \in \bbR,
\end{equation*}
and the family of modulation operators $\operatorname{M}_{\xi} : L^2(\bbR) \to L^2(\bbR)$, with $\xi \in \bbR$, given by 
\begin{equation*}
    \operatorname{M}_{\xi} f(t) = f(t) \rme^{2\pi\rmi t \xi}, \qquad t \in \bbR,
\end{equation*}
for $f \in L^2(\bbR)$.

\section{Main result}

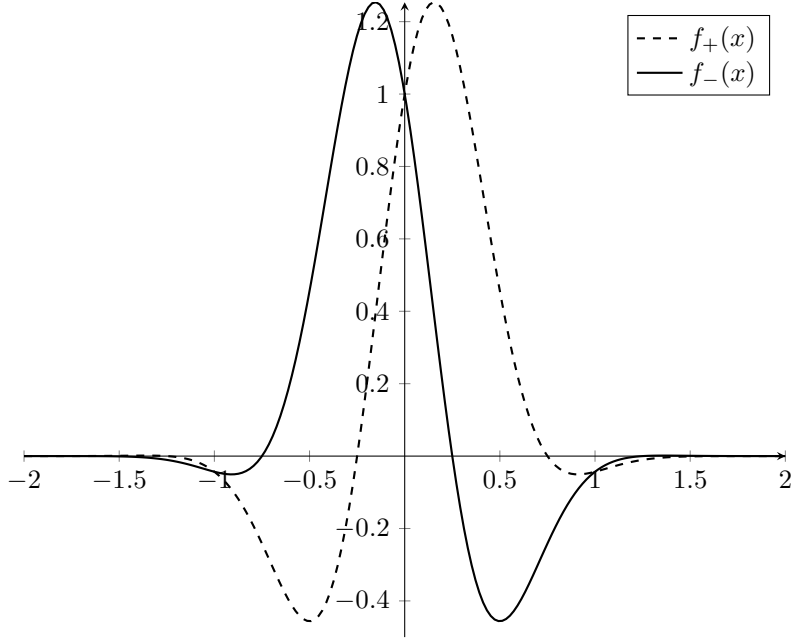
\begin{figure}
    \centering
    \begin{tikzpicture}
        \begin{axis}[ymin = -0.5, height=10cm, axis lines = middle]
            \addplot[domain=-2:2, samples=200, dashed, thick] {exp(-pi*x^2)*(cos(pi*x) + sin(pi*x))};
            \addlegendentry{$f_+(x)$}
            \addplot[domain=-2:2, samples=200, thick] {exp(-pi*x^2)*(cos(pi*x) - sin(pi*x))};
            \addlegendentry{$f_-(x)$}
        \end{axis}
    \end{tikzpicture}
    \caption{The functions defined in equation \eqref{eq:fpm} with $a=1$.}
    \label{fig:fpm}
\end{figure}

Using the relation of the Gabor transform and the Bargmann transform
\cite{grochenig2001foundations} in conjunction with the Hadamard factorisation theorem allows us to design functions $f_{\pm} \in L^2(\bbR)$ which do not agree up to global phase but which generate measurements \eqref{eq:measurements} that agree when $S \subset \bbR^2$ is chosen to be any set of infinitely many equidistant parallel lines. For the specific set $S = a\bbZ \times \bbR$, with $a > 0$, this construction leads us to consider the \emph{real-valued} functions 
\begin{equation}
    \label{eq:fpm}
    f_{\pm}(t) := \rme^{-\pi t^2} \left( \cos \left( \frac{\pi t}{a} \right) \pm \sin \left( \frac{\pi t}{a} \right) \right), \qquad t \in \bbR,
\end{equation}
as depicted in Figure \ref{fig:fpm}. It is easy to see that $f_+$ and $f_-$ do not agree up to global phase: Consider for instance that 
\begin{equation*}
    f_+\left( \frac{a}{4} \right) = \rme^{-\frac{\pi a^2}{16}} \left( \cos\left( \frac{\pi}{4} \right) + \sin\left( \frac{\pi}{4} \right) \right) = \sqrt{2} \rme^{-\frac{\pi a^2}{16}} \neq 0,
\end{equation*}
as well as 
\begin{equation*}
    f_-\left( \frac{a}{4} \right) = \rme^{-\frac{\pi a^2}{16}} \left( \cos\left( \frac{\pi}{4} \right) - \sin\left( \frac{\pi}{4} \right) \right) = 0.
\end{equation*}
Furthermore, it is not hard to compute the Gabor transforms of $f_+$ and $f_-$ and thereby verify the following lemma.

\begin{lemma}
    \label{lem:gaborfpm}
    Let $a > 0$ and let $f_\pm \in L^2(\bbR)$ be defined as in equation \eqref{eq:fpm}. Then, it holds that 
    \begin{equation}
        \label{eq:agree_on_parallel_lines}
        \abs{\calV_\phi f_+} = \abs{\calV_\phi f_-} \mbox{ on } a \bbZ \times \bbR.
    \end{equation}
\end{lemma}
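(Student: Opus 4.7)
The plan is to reduce the claim to the explicit formula for the Gabor transform of a Gaussian and a short complex-arithmetic calculation. The crucial observation is that $f_{\pm}$ is, up to scalars, a superposition of two modulated Gaussians.

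First, I would rewrite $f_{\pm}$ using Euler's formulas as
\begin{equation*}
    f_{\pm}(t) = c_{\pm} \rme^{2\pi\rmi t / (2a)} \phi(t) + \overline{c_{\pm}} \rme^{-2\pi\rmi t / (2a)} \phi(t) = c_{\pm} \operatorname{M}_{1/(2a)} \phi(t) + \overline{c_{\pm}} \operatorname{M}_{-1/(2a)} \phi(t),
\end{equation*}
where $c_+ = (1-\rmi)/2$ and $c_- = (1+\rmi)/2$. Note the key identities $|c_{\pm}|^2 = 1/2$ and $c_{\pm}^2 = \mp \rmi/2$.

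Second, I would use the covariance of $\calV_\phi$ under modulation, $\calV_\phi(\operatorname{M}_\xi f)(x,\omega) = \calV_\phi f(x,\omega-\xi)$, together with the standard Gaussian formula
\begin{equation*}
    \calV_\phi \phi(x,\omega) = 2^{-1/2} \rme^{-\pi \rmi x \omega} \rme^{-\pi(x^2+\omega^2)/2},
\end{equation*}
which follows by completing the square in the defining integral, to obtain
\begin{equation*}
    \calV_\phi f_{\pm}(x,\omega) = 2^{-1/2} \rme^{-\pi\rmi x \omega} \rme^{-\pi x^2/2} \left[ c_{\pm} \rme^{\pi\rmi x/(2a)} A(\omega) + \overline{c_{\pm}} \rme^{-\pi\rmi x/(2a)} B(\omega) \right],
\end{equation*}
with $A(\omega) := \rme^{-\pi(\omega - 1/(2a))^2/2}$ and $B(\omega) := \rme^{-\pi(\omega + 1/(2a))^2/2}$.

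Third, I would specialise to $x = an$ for $n \in \bbZ$. Then $\rme^{\pi\rmi x/(2a)} = \rmi^n$ and $\rme^{-\pi\rmi x/(2a)} = (-\rmi)^n$, and the unimodular prefactor cancels upon taking absolute values. Expanding the square gives
\begin{equation*}
    \abs{\calV_\phi f_{\pm}(an,\omega)}^2 = \tfrac{1}{2} \rme^{-\pi (an)^2} \left[ |c_{\pm}|^2 A^2 + |c_{\pm}|^2 B^2 + 2 A B \, \Re\!\left( c_{\pm}^2 \, \rmi^n \overline{(-\rmi)^n} \right) \right].
\end{equation*}
Since $\rmi^n \overline{(-\rmi)^n} = \rmi^{2n} = (-1)^n \in \bbR$ and $c_{\pm}^2 = \mp \rmi/2$ is purely imaginary, the cross term vanishes identically. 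The remaining expression depends only on $|c_{\pm}|^2 = 1/2$, which is the same for both signs, proving \eqref{eq:agree_on_parallel_lines}.

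The only potential obstacle is arithmetic bookkeeping of complex phases; there is no genuine analytic difficulty. The conceptual content is entirely encoded in the identity $c_+^2, c_-^2 \in \rmi\bbR$, which is what makes the interference term between the two Gaussian bumps vanish precisely at the sample points $x \in a\bbZ$.
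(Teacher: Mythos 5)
Your proof is correct and follows essentially the same route as the paper: the same decomposition $f_\pm = c_\pm \operatorname{M}_{1/(2a)}\phi + \overline{c_\pm}\operatorname{M}_{-1/(2a)}\phi$, the same covariance property, and the same explicit formula for $\calV_\phi\phi$. The only (cosmetic) difference is the final algebraic step: the paper exhibits the bracketed expression for $f_+$ at $x=ak$ as $(-1)^k$ times the complex conjugate of the one for $f_-$, whereas you expand $\abs{\cdot}^2$ and observe that the interference term carries the factor $\Re\left(c_\pm^2(-1)^n\right)=0$; both reduce to the same two lines of complex arithmetic.
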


\begin{remark}
    It is immediate that the above lemma implies that 
    \begin{equation*}
        \abs{\calV_\phi f_+} = \abs{\calV_\phi f_-}
    \end{equation*}
    continues to hold on $a \bbZ \times b \bbZ$ no matter how small one chooses $b > 0$.
\end{remark}

\begin{remark}
    As mentioned above, the functions $f_+$ and $f_-$ were constructed by considering what equation \eqref{eq:agree_on_parallel_lines} implies for the Bargmann transforms $F_+$ and $F_-$ of $f_+$ and $f_-$. In particular, we applied Hadamard's factorisation theorem to $F_+$ and $F_-$ and then followed ideas similar to the ones presented in \cite{jaming2014uniqueness,mc2004phase}.
\end{remark}

\begin{proof}[Proof of Lemma \ref{lem:gaborfpm}]
    Let us start by noting that 
    \begin{equation*}
        f_{\pm} = \left( \frac{1}{2} \pm \frac{1}{2\rmi} \right) \operatorname{M}_{\frac{1}{2a}} \phi + \left( \frac{1}{2} \mp \frac{1}{2\rmi} \right) \operatorname{M}_{-\frac{1}{2a}} \phi.
    \end{equation*}
    By the linearity and the covariance property of the Gabor transform (see Lemma 3.1.3 on p.~41 of \cite{grochenig2001foundations}), we obtain 
    \begin{equation*}
        \calV_\phi f_{\pm}(x,\omega) = \left( \frac{1}{2} \pm \frac{1}{2\rmi} \right) \calV_\phi \phi\left( x, \omega - \frac{1}{2a} \right) + \left( \frac{1}{2} \mp \frac{1}{2\rmi} \right) \calV_\phi \phi\left( x, \omega + \frac{1}{2a} \right),
    \end{equation*}
    for $x,\omega \in \bbR$. Using that the Gaussian is invariant under the Fourier transform, one may calculate that
    \begin{equation*}
        \calV_\phi \phi (x,\omega) = \frac{1}{\sqrt{2}} \rme^{-\pi \rmi x \omega} \rme^{-\frac{\pi}{2}\left(x^2 + \omega^2\right)}
    \end{equation*}
    such that
    \begin{equation*}
        \calV_\phi f_{\pm}(x,\omega) = \frac{1}{2 \sqrt{2}} ( 1 \mp \rmi ) \rme^{-\pi \rmi x \left( \omega - \frac{1}{2a} \right)} \rme^{-\frac{\pi}{2} \left( x^2 + \left( \omega-\frac{1}{2a} \right)^2 \right)} + \frac{1}{2 \sqrt{2}} ( 1 \pm \rmi ) \rme^{-\pi \rmi x \left( \omega + \frac{1}{2a} \right)} \rme^{-\frac{\pi}{2} \left( x^2 + \left( \omega+\frac{1}{2a} \right)^2 \right)}.
    \end{equation*}
    We might reformulate the above expression to 
    \begin{equation*}
        \calV_\phi f_{\pm}(x,\omega) = \frac{\rme^{-\frac{\pi}{8a^2}}}{2 \sqrt{2}} \rme^{-\pi\rmi x\omega} \rme^{-\frac{\pi}{2}\left(x^2 + \omega^2\right)} \left( ( 1 \mp \rmi ) \rme^{\frac{\pi}{2a}\left( \omega + \rmi x \right)} + ( 1 \pm \rmi ) \rme^{-\frac{\pi}{2a}\left( \omega + \rmi x \right)} \right).
    \end{equation*}
    If $x = ak$, where $k \in \bbZ$, and $\omega \in \bbR$, then it holds that 
    \begin{align*}
        ( 1 - \rmi ) \rme^{\frac{\pi}{2a}\left( \omega + \rmi x \right)} + ( 1 + \rmi ) \rme^{-\frac{\pi}{2a}\left( \omega + \rmi x \right)} &= ( 1 - \rmi ) \rmi^k \rme^{\frac{\pi \omega}{2a}} + ( 1 + \rmi ) (-\rmi)^k \rme^{-\frac{\pi \omega}{2a}} \\
        &= \overline{ ( 1 + \rmi ) (-\rmi)^k \rme^{\frac{\pi \omega}{2a}} + ( 1 - \rmi ) \rmi^k \rme^{-\frac{\pi \omega}{2a}} } \\
        &= (-1)^k \cdot \overline{ ( 1 + \rmi ) \rmi^k \rme^{\frac{\pi \omega}{2a}} + ( 1 - \rmi ) (-\rmi)^k \rme^{-\frac{\pi \omega}{2a}} } \\
        &= (-1)^k \cdot \overline{ ( 1 + \rmi ) \rme^{\frac{\pi}{2a}\left( \omega + \rmi x \right)} + ( 1 - \rmi ) \rme^{-\frac{\pi}{2a}\left( \omega + \rmi x \right)} }.
    \end{align*}
    It follows that 
    \begin{equation*}
        \abs{\calV_\phi f_+(x,\omega)} = \abs{\calV_\phi f_-(x,\omega)},
    \end{equation*}
    for $x \in a \bbZ$ and $\omega \in \bbR$. 
\end{proof}

\begin{remark}
    According to the preceding proof, it holds that 
    \begin{equation*}
        \calV_\phi f_{\pm}(x,\omega) = \frac{\rme^{-\frac{\pi}{8a^2}}}{\sqrt{2}} \rme^{-\pi\rmi x\omega} \rme^{-\frac{\pi}{2}\left(x^2 + \omega^2\right)} \left( \cosh\left( \frac{\pi}{2a} \left( \omega + \rmi x \right) \right) \mp \rmi \sinh\left( \frac{\pi}{2a} \left( \omega + \rmi x \right) \right) \right), \qquad x,\omega \in \bbR.
    \end{equation*}
\end{remark}

To generalise Lemma \ref{lem:gaborfpm}, we can use the fractional Fourier transform as well as time shifts and modulations to rotate and shift the functions $f_\pm$ in the time frequency plane. In this way, we construct the functions
\begin{equation}
    \label{eq:fpm_refined}
    f_{\pm}^{\alpha,\lambda} := \operatorname{T}_{x_0} \operatorname{M}_{\omega_0} \calF_{-\alpha} f_{\pm},
\end{equation}
for $\alpha \in \bbR$ and $\lambda = (x_0,\omega_0) \in \bbR^2$. We may now make use of well-known properties of the Gabor transform to show the following result:

\begin{theorem}[Main theorem]
    \label{thm:mainthm}
    Let $a > 0$, $\alpha \in \bbR$ and $\lambda = (x_0,\omega_0) \in \bbR^2$. Let furthermore $f_{\pm}^{\alpha,\lambda} \in L^2(\bbR)$ be defined as in equation \eqref{eq:fpm_refined}. Then, it holds that $f_+^{\alpha,\lambda}$ and $f_-^{\alpha,\lambda}$ do not agree up to global phase and yet
    \begin{equation*}
        \abs{\calV_\phi f_+^{\alpha,\lambda}} = \abs{\calV_\phi f_-^{\alpha,\lambda}} \mbox{ on } R_\alpha(a \bbZ \times \bbR) + \lambda.
    \end{equation*}
    Furthermore, it holds that for all $\epsilon > 0$, there exists a constant $C_\epsilon \geq 1$ (that additionally depends on $a$ and $\lambda$) such that 
    \begin{equation*}
        \abs{\calV_\phi f_{\pm}^{\alpha,\lambda}(x,\omega)} \leq C_\epsilon \rme^{-\left( \frac{\pi}{2}-\epsilon \right)\left( x^2 + \omega^2 \right)}, \qquad x,\omega \in \bbR.
    \end{equation*}
\end{theorem}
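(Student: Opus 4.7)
The plan is to reduce the three assertions to Lemma \ref{lem:gaborfpm} and the explicit expression for $\calV_\phi f_\pm$ stated in the remark that follows its proof. Two transport identities will do almost all of the work. First, since $\phi$ is real, the standard covariance of the Gabor transform gives
\begin{equation*}
    \abs{\calV_\phi (\operatorname{T}_{x_0} \operatorname{M}_{\omega_0} h)(x,\omega)} = \abs{\calV_\phi h(x - x_0, \omega - \omega_0)}, \qquad h \in L^2(\bbR).
\end{equation*}
Second, applying Lemma \ref{lem:frftandgabor} with window $\phi$ and angle $-\alpha$, and using the invariance $\calF_{-\alpha}\phi = \phi$, yields
\begin{equation*}
    \abs{\calV_\phi \calF_{-\alpha} f(x,\omega)} = \abs{\calV_\phi f(R_{-\alpha}(x,\omega))}, \qquad f \in L^2(\bbR).
\end{equation*}
Composing the two will produce the master formula
\begin{equation*}
    \abs{\calV_\phi f_\pm^{\alpha,\lambda}(x,\omega)} = \abs{\calV_\phi f_\pm(R_{-\alpha}(x - x_0, \omega - \omega_0))}.
\end{equation*}

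Given this master formula, both the non-agreement claim and the lattice equality will follow almost mechanically. For the former, supposing $f_+^{\alpha,\lambda} = \rme^{\rmi \mu} f_-^{\alpha,\lambda}$ forces $f_+ = \rme^{\rmi \mu} f_-$ by invertibility of $\operatorname{T}_{x_0}$, $\operatorname{M}_{\omega_0}$, and $\calF_{-\alpha}$, which contradicts the already-observed $f_+(a/4) \neq 0 = f_-(a/4)$. For the latter, any $(x,\omega) \in R_\alpha(a \bbZ \times \bbR) + \lambda$ can be written as $R_\alpha(y) + \lambda$ with $y \in a\bbZ \times \bbR$, so $R_{-\alpha}(x - x_0, \omega - \omega_0) = y$, and Lemma \ref{lem:gaborfpm} applied at $y$ closes the argument.

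The Gaussian decay bound is the only step with real technical content, and I expect it to be the main obstacle. Starting from the explicit formula in the remark after Lemma \ref{lem:gaborfpm}, I would bound both hyperbolic terms by $\rme^{\pi \abs{v}/(2a)}$ to obtain
\begin{equation*}
    \abs{\calV_\phi f_\pm(u,v)} \leq C_1 \, \rme^{-\pi(u^2+v^2)/2} \, \rme^{\pi \abs{v}/(2a)}.
\end{equation*}
For any $\epsilon_1 > 0$, the elementary inequality $\pi \abs{v}/(2a) \leq \epsilon_1 v^2 + \pi^2/(16 a^2 \epsilon_1)$ absorbs the linear term into the Gaussian, producing decay of the form $\rme^{-(\pi/2 - \epsilon_1)(u^2+v^2)}$. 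Since $R_{-\alpha}$ is an isometry, the master formula transfers this directly to decay of the form $\rme^{-(\pi/2 - \epsilon_1)((x-x_0)^2 + (\omega-\omega_0)^2)}$. A second Young-type step, using $2\abs{x x_0 + \omega \omega_0} \leq \epsilon_2(x^2+\omega^2) + \abs{\lambda}^2/\epsilon_2$, then shifts the centre of the Gaussian to the origin at the cost of another arbitrarily small loss. Choosing $\epsilon_1, \epsilon_2 > 0$ such that $\epsilon_1 + (\pi/2 - \epsilon_1)\epsilon_2 = \epsilon$ will yield the stated bound with a constant $C_\epsilon$ absorbing the dependence on $a$ and $\lambda$ (and notably independent of $\alpha$, by isometry invariance).
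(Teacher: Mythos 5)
Your proposal is correct and follows essentially the same route as the paper: the same master formula $\abs{\calV_\phi f_\pm^{\alpha,\lambda}(x,\omega)} = \abs{\calV_\phi f_\pm(R_{-\alpha}(x-x_0,\omega-\omega_0))}$ obtained from covariance, Gaussian invariance under $\calF_{-\alpha}$ and Lemma \ref{lem:frftandgabor}, followed by the same $\rme^{\pi\abs{\cdot}/(2a)}$ bound on the hyperbolic factor and absorption of the linear and shift terms into the Gaussian. You merely spell out two points the paper leaves implicit (the non-agreement up to global phase via unitarity of $\operatorname{T}_{x_0}$, $\operatorname{M}_{\omega_0}$, $\calF_{-\alpha}$, and the ``readily seen'' choice of $C_\epsilon$); the only nit is that the covariance identity holds for any window, not because $\phi$ is real.
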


\begin{remark}
    It is immediate that the above theorem implies that 
    \begin{equation*}
        \abs{\calV_\phi f_+^{\alpha,\lambda}} = \abs{\calV_\phi f_-^{\alpha,\lambda}}
    \end{equation*}
    continues to hold on the lattice $S = R_\alpha(a \bbZ \times b\bbZ) + \lambda$ no matter how small one chooses $b > 0$.
\end{remark}

\begin{proof}[Proof of Theorem \ref{thm:mainthm}]
    We directly compute that 
    \begin{equation}
    \label{eq:central_to_proof}
    \begin{aligned}
        \abs{\calV_\phi f_{\pm}^{\alpha,\lambda}(x,\omega) } &= \abs{ \calV_\phi \operatorname{T}_{x_0} \operatorname{M}_{\omega_0} \calF_{-\alpha} f_{\pm} (x,\omega) } = \abs{ \calV_\phi \calF_{-\alpha} f_{\pm} (x-x_0,\omega-\omega_0) } \\
        &= \abs{ \calV_{\calF_{-\alpha} \phi} \calF_{-\alpha} f_{\pm} (x-x_0,\omega-\omega_0) }
        = \abs{ \calV_{\phi} f_{\pm} \left(R_{-\alpha}(x-x_0,\omega-\omega_0)\right) },
    \end{aligned}
    \end{equation}
    for $x,\omega \in \bbR$, where we have used the covariance property of the Gabor transform (see Lemma 3.1.3 on p.~41 of \cite{grochenig2001foundations}) in the second step, the invariance of the Gaussian under the fractional Fourier transform in the third step and Lemma \ref{lem:frftandgabor} in the fourth and final step. It follows immediately from the above consideration and Lemma \ref{lem:gaborfpm} that 
    \begin{equation*}
        \abs{\calV_\phi f_+^{\alpha,\lambda}} = \abs{\calV_\phi f_-^{\alpha,\lambda}} \mbox{ on } R_\alpha(a \bbZ \times \bbR) + \lambda.
    \end{equation*}
    
    Additionally, we may remember the proof of Lemma \ref{lem:gaborfpm} and estimate that 
    \begin{align*}
        \abs{\calV_\phi f_{\pm}(x,\omega) } &= \frac{\rme^{-\frac{\pi}{8a^2}}}{2 \sqrt{2}} \abs{ ( 1 \mp \rmi ) \rme^{\frac{\pi}{2a}\left( \omega + \rmi x \right)} + ( 1 \pm \rmi ) \cdot \rme^{-\frac{\pi}{2a}\left( \omega + \rmi x \right)} } \rme^{-\frac{\pi}{2}\left(x^2 + \omega^2\right)} \\
        &\leq \rme^{-\frac{\pi}{8a^2}} \rme^{\frac{\pi}{2a} \abs{\omega}} \cdot \rme^{-\frac{\pi}{2}\left(x^2 + \omega^2\right)},
    \end{align*}
    for $x,\omega \in \bbR$. It follows that 
    \begin{align*}
        \abs{\calV_\phi f_{\pm}^{\alpha,\lambda}(x,\omega) } &= \abs{ \calV_{\phi} f_{\pm} \left(R_{-\alpha}(x-x_0,\omega-\omega_0)\right) } \\
        &\leq \rme^{-\frac{\pi}{8a^2}} \rme^{\frac{\pi}{2a} \abs{(x-x_0) \sin \alpha + (\omega-\omega_0) \cos \alpha}} \cdot \rme^{-\frac{\pi}{2}\left((x-x_0)^2 + (\omega-\omega_0)^2\right)} \\
        &\leq \rme^{-\frac{\pi}{8a^2}} \rme^{\frac{\pi}{2a} \left( \abs{x-x_0} + \abs{\omega-\omega_0} \right) } \cdot \rme^{-\frac{\pi}{2}\left((x-x_0)^2 + (\omega-\omega_0)^2\right)}.
    \end{align*}
    % Let us now consider $\epsilon > 0$ arbitrary but fixed. It is then readily seen that 
    % \begin{equation*}
    %     \rme^{\frac{\pi}{2a} \abs{x-x_0} } \rme^{-\frac{\pi}{2}(x-x_0)^2} \leq c_0(a,\epsilon,x_0) \cdot \rme^{- \frac{\pi-\epsilon}{2} \left( x - x_0 \right)^2}, \qquad x \in \bbR,
    % \end{equation*}
    % where 
    % \begin{equation*}
    %     c_0(a,\epsilon,x_0) := \max \set{ \rme^{\frac{\pi}{2a} \abs{x-x_0} - \frac{\epsilon}{2} (x-x_0)^2 } }{ x \in \bbR,~\abs{x-x_0} \leq \frac{\pi}{a\epsilon} }.
    % \end{equation*}
    % Similarly, one may see that 
    % \begin{equation*}
    %     \rme^{- \frac{\pi-\epsilon}{2} \left( x - x_0 \right)^2} \leq c_1(\epsilon,x_0) \cdot \rme^{- \left( \frac{\pi}{2} - \epsilon \right) x^2}, \qquad x \in \bbR,
    % \end{equation*}
    % where 
    % \begin{equation*}
    %     c_1(\epsilon,x_0) := \rme^{-\frac{\pi-\epsilon}{2} x_0^2} \cdot \max \set{ \rme^{(\pi-\epsilon)x_0 x - \frac{\epsilon}{2}x^2} }{ x \in \bbR,~\abs{x} \leq \frac{2(\pi-\epsilon)x_0}{\epsilon} }.
    % \end{equation*}
    % It follows, that there exists $C_\epsilon > 0$ depending on $\epsilon$ as well as $a$, $x_0$ and $\omega_0$ such that 
    % \begin{equation*}
    %     \abs{\calV_\phi f_{\pm}^{\alpha,\lambda}(x,\omega)} \leq C_\epsilon \cdot \rme^{-\left( \frac{\pi}{2} - \epsilon \right) \left( x^2 + \omega^2 \right) }, \qquad x,\omega \in \bbR.
    % \end{equation*}
    Let us now consider $\epsilon > 0$ arbitrary but fixed. It is then readily seen that there exists $C_\epsilon \geq 1$ depending on $\epsilon$ as well as $a$, $x_0$ and $\omega_0$ such that 
    \begin{equation*}
        \abs{\calV_\phi f_{\pm}^{\alpha,\lambda}(x,\omega)} \leq C_\epsilon \cdot \rme^{-\left( \frac{\pi}{2} - \epsilon \right) \left( x^2 + \omega^2 \right) }, \qquad x,\omega \in \bbR.
    \end{equation*}
\end{proof}

\begin{remark}
    The reader might note that we could have directly constructed the functions $f_\pm^{\alpha,\lambda}$ using the Hadamard factorisation theorem as well as the relation between the Gabor and the Bargmann transform. This is indeed how we first constructed the examples $f_\pm^{\alpha,\lambda}$. The formulation involving the fractional Fourier transform was suggested by the first reviewer and it has allowed for a much shorter and more elegant presentation of the paper.
\end{remark}

\paragraph{Acknowledgements} The authors want to especially thank the first reviewer for their comments which have allowed us to simplify the proofs and thereby improved the overall presentation of the present paper greatly. Furthermore, the authors acknowledge funding through SNF Grant 200021\_184698.

\bibliographystyle{plain}
\bibliography{sources} 

\end{document}